\newtheorem{theorem}{Theorem}[section]
\theoremstyle{corollary}
\theoremstyle{conjecture}
\theoremstyle{assumption}
\theoremstyle{proposition}
\newtheorem{proposition}[theorem]{Proposition}
\theoremstyle{remark}
\newtheorem{remark}[theorem]{Remark}
\numberwithin{equation}{section}
\newcommand{\ii}{\ensuremath{\sqrt{-1}}}
\newcommand{\pp}{\bar\partial}
\DeclareMathOperator{\Vol}{dVol}
\DeclareMathOperator{\Arg}{\Theta}
\begin{document}

\title[]{A note on the supercritical deformed Hermitian-Yang-Mills equation}
\author{Junsheng Zhang}
\address{Department of Mathematics\\
 University of California\\
 Berkeley, CA, USA, 94720\\}
\curraddr{}
\email{jszhang@berkeley.edu}
\thanks{}


\maketitle
\begin{abstract}
We show that on a compact K\"ahler manifold all real $(1,1)$-classes admitting solutions to the supercritical deformed Hermitian-Yang-Mills equation form a both open and closed subset of those which satisfy the numerical condition proposed by Collins-Jacob-Yau. Moreover we show by examples that it can be a proper subset. This disproves a conjecture made by Collins-Jacob-Yau. 
\end{abstract}

\section{Introduction}\label{s1}
Let $(X,\chi)$ be an $n$-dimensional compact K\"ahler manifold, and $\omega$ be a closed real $(1,1)$-form. Let $\Theta_{\chi}(\omega)\in (-\pi,\pi]$ denote the principal argument of the complex number $ \int_X(\omega+\ii\chi)^n,$ which we always assume to be nonzero.
Let $\omega_{\varphi}$ denote $\omega+\ii\partial\pp \varphi$ for any smooth real function $\varphi$ on $X$.  Then the supercritical deformed Hermitian-Yang-Mills (dHYM) equation is given by 
\begin{equation}\label{specified lagrangian}
    Q_{\chi}(\omega_{\varphi}):=\sum_{i=1}^n\arccot(\lambda_i)=\Arg_{\chi}(\omega)\in (0,\pi),
\end{equation} where $\lambda_i$ denotes the eigenvalues of $\omega_{\varphi}$ with respect to $\chi$ and $\arccot(x)=\frac{\pi}{2}-\arctan(x)$ takes values in $(0,\pi)$. 


For any closed real $(1,1)$-form $\omega$ and real number $\theta \in (0,\pi)$, let  $\Vol_{\chi}^p(\omega,\theta)$ denote the closed real $(p,p)$-form 
\begin{equation}\label{form}
    \operatorname{Re}(\omega+\sqrt{-1} \chi)^p-\cot (\theta) \operatorname{Im}(\omega+\sqrt{-1} \chi)^p.
\end{equation} Moreover if $\theta$ coincides with $\Arg_{\chi}(\omega)$, then we will omit the dependence on $\theta$ and just denote the form \eqref{form} by $\Vol_{\chi}^p(\omega)$.
The following result follows from \cite[Lemma 8.2]{CJY}, and for completeness, we include a short proof in Section \ref{s2}.

\begin{proposition}[\cite{CJY}]\label{easy direction}
If $\omega$ is a solution to \eqref{specified lagrangian}, i.e. $ Q_{\chi}(\omega)=\Arg_{\chi}(\omega)\in (0,\pi),$ then for any p-dimensional analytic subvariety $V$ of $X$, $0<p<n$, we have 
\begin{equation}\label{conj}
    \int_V\Vol_{\chi}^p(\omega)>0.
\end{equation}
\end{proposition}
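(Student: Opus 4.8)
The plan is to establish the stronger \emph{pointwise} fact that $\Vol_{\chi}^p(\omega)$ is a strictly positive $(p,p)$-form at every point, from which $\int_V \Vol_{\chi}^p(\omega)>0$ follows by integrating its restriction over the regular locus of $V$. Fix a point $x\in X$. Since $\chi$ is K\"ahler and $\omega$ is a real $(1,1)$-form, I can choose complex coordinates $(z_1,\dots,z_n)$ centered at $x$ in which both forms are simultaneously diagonalized, say $\chi=\sum_{j=1}^n e_j$ and $\omega=\sum_{j=1}^n\lambda_j e_j$ with $e_j:=\ii\,dz_j\wedge d\bar z_j$ and $\lambda_j$ the eigenvalues appearing in \eqref{specified lagrangian}. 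Each $e_j$ is strongly positive, and the wedge products $e_I:=\bigwedge_{j\in I}e_j$ over $p$-element index sets $I$ are strongly positive $(p,p)$-forms.

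Expanding, $(\omega+\ii\chi)^p=\big(\sum_j(\lambda_j+\ii)e_j\big)^p=p!\sum_{|I|=p}\Big(\prod_{j\in I}(\lambda_j+\ii)\Big)e_I$. Writing each factor in polar form, $\lambda_j+\ii=\sqrt{\lambda_j^2+1}\,e^{\ii\,\operatorname{arccot}(\lambda_j)}$ (note that $\operatorname{arccot}(\lambda_j)\in(0,\pi)$ is exactly the argument of $\lambda_j+\ii$), so $\prod_{j\in I}(\lambda_j+\ii)=r_I e^{\ii\phi_I}$ with $r_I=\prod_{j\in I}\sqrt{\lambda_j^2+1}>0$ and $\phi_I=\sum_{j\in I}\operatorname{arccot}(\lambda_j)$. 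Therefore the coefficient of $e_I$ in $\Vol_{\chi}^p(\omega)=\operatorname{Re}(\omega+\ii\chi)^p-\cot\theta\,\operatorname{Im}(\omega+\ii\chi)^p$, where $\theta=\Arg_{\chi}(\omega)$, equals $p!\,r_I\big(\cos\phi_I-\cot\theta\,\sin\phi_I\big)=p!\,r_I\,\frac{\sin(\theta-\phi_I)}{\sin\theta}$. Here supercriticality enters decisively: equation \eqref{specified lagrangian} says $\sum_{j=1}^n\operatorname{arccot}(\lambda_j)=\theta\in(0,\pi)$, so on one hand $\sin\theta>0$, and on the other hand $\theta-\phi_I=\sum_{j\notin I}\operatorname{arccot}(\lambda_j)$ is a sum of $n-p\ge1$ strictly positive terms whose total is at most $\theta<\pi$; hence $\theta-\phi_I\in(0,\pi)$ and $\sin(\theta-\phi_I)>0$. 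Consequently every coefficient $c_I$ is strictly positive.

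It remains to convert strict positivity of the coefficients $c_I$ in the diagonalizing frame into strict positivity of the restriction of $\Vol_{\chi}^p(\omega)$ to an \emph{arbitrary} complex $p$-plane, which is the only genuinely delicate point. Setting $c_0:=\min_{|I|=p}c_I>0$ and using $\chi^p=p!\sum_{|I|=p}e_I$, I would write $\Vol_{\chi}^p(\omega)_x=\frac{c_0}{p!}\chi_x^p+\sum_{|I|=p}(c_I-c_0)e_I$, where the second term is strongly positive. Restricting to any $p$-plane $S\subset T_xX$, the strongly positive part contributes a nonnegative multiple of the induced volume while $\frac{c_0}{p!}\chi^p|_S$ contributes a strictly positive one, so $\Vol_{\chi}^p(\omega)_x|_S>0$. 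Taking $S=T_xV$ at each regular point $x$ of $V$ shows that $\Vol_{\chi}^p(\omega)|_{V_{\mathrm{reg}}}$ is a strictly positive multiple of the induced volume form, and integrating over the regular locus (which carries the full mass of $V$) yields $\int_V\Vol_{\chi}^p(\omega)>0$. The main obstacle is precisely this last step: the coefficients are computed in a point-dependent frame adapted to $\omega$ and $\chi$, and one must argue intrinsically, via strong positivity and the comparison with $\chi^p$, that positivity persists against all $p$-planes and not merely the coordinate ones.
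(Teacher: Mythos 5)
Your proof is correct, but it reaches the pointwise positivity of $\Vol_{\chi}^p(\omega)$ along $V$ by a genuinely different route from the paper. The paper restricts to $V$ first: near a regular point it realizes $(\chi|_V)^{-1}\cdot(\omega|_V)$ as a principal submatrix of $\chi^{-1}\cdot\omega$, invokes the Cauchy interlacing (min--max) inequalities $\lambda_j\geq\mu_j\geq\lambda_{j+n-p}$, and deduces that the ``restricted angle'' $\sum_{i=1}^p\arccot(\mu_i)$ lies in $(0,\Arg_{\chi}(\omega))\subset(0,\pi)$, whence $\Vol_{\chi}^p(\omega)|_V$ is a positive multiple of $\chi^p|_V$. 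You instead never look at the eigenvalues of the restriction: you diagonalize $\omega$ against $\chi$ in the ambient tangent space, observe that every diagonal coefficient of $\Vol_{\chi}^p(\omega)$ equals $p!\,r_I\sin\bigl(\sum_{j\notin I}\arccot(\lambda_j)\bigr)/\sin\theta>0$ (the same supercriticality arithmetic, applied to the complementary index set rather than via interlacing), and then use the convex cone of strongly positive forms --- via the decomposition $\Vol_{\chi}^p(\omega)=\tfrac{c_0}{p!}\chi^p+\sum_I(c_I-c_0)e_I$ --- to upgrade coefficientwise positivity to strict positivity against every complex $p$-plane. You are right to flag that last step as the delicate one, and your handling of it is correct: the remainder is a nonnegative combination of decomposable positive forms, hence restricts nonnegatively to any $p$-plane, while $\chi^p$ restricts strictly positively. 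What your argument buys is a statement intrinsic to the ambient manifold (strict Lelong-positivity of $\Vol_{\chi}^p(\omega)$ as a $(p,p)$-form, independent of any particular $V$), at the cost of the extra positivity-cone bookkeeping; what the paper's interlacing argument buys is a direct geometric interpretation of the restricted form's phase, which is also the mechanism behind the monotonicity observation in Remark \ref{positve along test family}.
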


Note that the integral $ \int_V\Vol_{\chi}^p(\omega)$ only depends on the cohomology classes $[\chi],[\omega]\in H^{1,1}_{\mathbb R}(X)$ for any $p$-dimensional analytic subvariety $V$, $p\geq 1$ and $\Arg_{\chi}(\omega)$ also only depends on the cohomological data.
Then we can introduce some notations: 
\begin{equation}
\begin{aligned}
 \mathcal{P}_{\chi} &= \left\{[\omega]\in H^{1,1}_{\mathbb R}(X): \Arg_{\chi}(\omega)\in (0,\pi)  \text{ and $\int_V\Vol_{\chi}^p(\omega)>0$ for any }\right. \\
 &\hspace{1.3in}\left. {\text{ $p$-dimensional analytic subvariety $V$, $0<p<n$}} \right\}, \\
  \mathcal{K}_{\chi}&:=\left\{[\omega]\in H^{1,1}_{\mathbb R}(X): \eqref{specified lagrangian} \text{ admits a smooth solution}\right\}.
 \end{aligned}
\end{equation} By the definition it is clear that $\mathcal{P}_{\chi}$ depends only on the class $[\chi]\in H^{1,1}_{\mathbb R}(X)$, but not clear whether $\mathcal{K}_{\chi}$ depends only on $[\chi]$. Although not needed in this paper, we mention that by the remarkable result \cite[Theorem 1.7]{Chen}, $\mathcal{K}_{\chi}$ indeed depends only on $[\chi]$.

Proposition \ref{easy direction} shows that $\mathcal{K}_{\chi}\subset \mathcal{P}_{\chi}$ and Collins-Jacob-Yau conjectured  \cite[Conjecture 1.5]{CJY} that for any compact K\"ahler manifold $(X,\chi)$, 
\begin{equation}\label{CJY conj}
\mathcal{K}_{\chi}=\mathcal{P}_{\chi}.
\end{equation}
  This has been confirmed for compact K\"ahler surfaces \cite{CJY} and projective manifolds \cite{Chen,DP,CLT,bal}.
  In this paper, we present examples that demonstrate $\mathcal{K}_{\chi}\neq \mathcal{P}_{\chi}$, disproving the conjecture for general compact K\"ahler manifolds. Initially, the author's examples involved blowing up a point on a generic torus. However, as Mao Sheng pointed out to the author, counterexamples could still exist, even for generic tori. Our main result is as follows.

\begin{theorem}\label{main theorem}
   For any compact K\"ahler manifold $(X,\chi)$,  $\mathcal{K}_{\chi}$ is a both open and closed subset of $\mathcal{P}_{\chi}$. Moreover for any $n\geq 3$, there exist $n$-dimensional compact K\"ahler manifolds for which  $\mathcal K_{\chi} \neq \mathcal{P}_{\chi}$.
\end{theorem}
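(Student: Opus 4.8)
The plan is to establish the two assertions by different means: the clopen statement by soft elliptic PDE theory, and the counterexamples by an explicit reduction to linear algebra on a torus.

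\textbf{Openness and closedness.} For openness I would argue by the implicit function theorem. At a class $[\omega]\in\mathcal K_{\chi}$ with solution $\omega_{\varphi}$, the linearization of $Q_{\chi}$ in the potential direction is $\psi\mapsto-\sum_i(1+\lambda_i^2)^{-1}(\ii\partial\pp\psi)_{i\bar i}$ (in a $\chi$-eigenbasis of $\omega_{\varphi}$), a second order linear elliptic operator with positive coefficients whose kernel consists only of constants by the maximum principle. Working in Hölder spaces and letting the phase constant $\Arg_{\chi}$ vary with the class, this operator is surjective onto the appropriate mean-zero slice, so every nearby class admits a solution; since $\Arg_{\chi}\in(0,\pi)$ and $Q_{\chi}\in(0,\pi)$ are open conditions, $\mathcal K_{\chi}$ is open in $H^{1,1}_{\mathbb R}(X)$, hence in $\mathcal P_{\chi}$. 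For closedness I would take $[\omega_j]\in\mathcal K_{\chi}$ converging to $[\omega_\infty]\in\mathcal P_{\chi}$ and prove uniform a priori estimates for the normalized potentials $\varphi_j$: because $\mathcal P_{\chi}$ is cut out by the strict inequalities \eqref{conj}, on a small compact neighborhood of $[\omega_\infty]$ these hold with uniform constants, and feeding this quantitative positivity into the supercritical dHYM estimates ($C^0$, then the concavity-based $C^2$ bound, then Evans--Krylov and Schauder) yields uniform $C^k$ control; a limit then solves \eqref{specified lagrangian} for $[\omega_\infty]$. I expect the uniform $C^0$/subsolution estimate to be the main obstacle, since this is exactly where the strict numerical positivity must be converted into analytic control.

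\textbf{Shape of a counterexample.} The clopen statement already dictates the form of any counterexample: if $\mathcal K_{\chi}\neq\mathcal P_{\chi}$ with $\mathcal K_{\chi}$ nonempty, then $\mathcal P_{\chi}$ must be disconnected and $\mathcal K_{\chi}$ a union of some but not all of its components. I would therefore look for $(X,\chi)$ making $\mathcal P_{\chi}$ disconnected, and the cleanest source is a very general complex torus $X=\mathbb C^n/\Lambda$ with $n\ge3$, chosen so that $X$ has no proper positive-dimensional analytic subvarieties. Then \eqref{conj} is vacuous and $\mathcal P_{\chi}=\{[\omega]:\Arg_{\chi}(\omega)\in(0,\pi)\}$. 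Taking $\chi$ a flat (translation-invariant) Kähler form, each class $[\omega]$ is represented by a constant Hermitian form with constant $\chi$-eigenvalues $\lambda_1,\dots,\lambda_n$, and $\Arg_{\chi}(\omega)$ is the principal argument of $\prod_i(\lambda_i+\ii)$, i.e. $\sum_i\operatorname{arccot}(\lambda_i)\bmod 2\pi$.

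\textbf{The lifted phase and the gap.} The key observation is that the nonlinear operator sees the unreduced sum $\sum_i\operatorname{arccot}(\lambda_i)\in(0,n\pi)$, not its principal value. Indeed, if $\omega_{\varphi}=\omega+\ii\partial\pp\varphi$ solves $Q_{\chi}(\omega_{\varphi})\equiv\Theta$, then at a maximum of $\varphi$ one has $\ii\partial\pp\varphi\le0$, so the ordered eigenvalues of $\omega_{\varphi}$ there are $\le\lambda_i$ and, $\operatorname{arccot}$ being decreasing, $\Theta\ge\sum_i\operatorname{arccot}(\lambda_i)$; a minimum of $\varphi$ gives the reverse inequality, whence $\Theta=\sum_i\operatorname{arccot}(\lambda_i)$. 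Since the equation forces $\Theta=\Arg_{\chi}(\omega)\in(0,\pi)$, a solution exists if and only if $\sum_i\operatorname{arccot}(\lambda_i)\in(0,\pi)$ (in which case the constant form itself solves it). Hence
\[
\mathcal K_{\chi}=\Big\{[\omega]:\textstyle\sum_i\operatorname{arccot}(\lambda_i)\in(0,\pi)\Big\},\qquad \mathcal P_{\chi}=\Big\{[\omega]:\big(\textstyle\sum_i\operatorname{arccot}(\lambda_i)\big)\bmod 2\pi\in(0,\pi)\Big\}.
\]
For $n\ge3$ the interval $(2\pi,3\pi)$ lies in the range of $\sum_i\operatorname{arccot}(\lambda_i)$ (take all $\lambda_i\ll0$), so classes with all $\lambda_i$ very negative lie in $\mathcal P_{\chi}\setminus\mathcal K_{\chi}$, giving $\mathcal K_{\chi}\subsetneq\mathcal P_{\chi}$. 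The remaining obstacle here is geometric rather than analytic: one must justify, for every $n\ge3$, the existence of a complex torus with no proper positive-dimensional analytic subvarieties, so that \eqref{conj} is genuinely vacuous.
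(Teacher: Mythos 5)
Your openness argument and your torus counterexample are essentially the paper's: openness is the same implicit function theorem computation (the linearization $s+\Delta_\eta v$ being an isomorphism modulo constants), and the counterexample is the same reduction on a subvariety-free torus, where the maximum principle pins the constant to the unreduced sum $\sum_i\arccot(\lambda_i)$ while membership in $\mathcal P_{\chi}$ only constrains it modulo $2\pi$; the paper even restricts to $\omega=A\chi$, so your version with a general flat representative is, if anything, slightly more general. Your observation that the clopen statement forces $\mathcal P_{\chi}$ to be disconnected in any counterexample is a nice way to motivate the construction.

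The genuine gap is in the closedness step. You propose to take $[\omega_j]\in\mathcal K_{\chi}$ converging to $[\omega_\infty]\in\mathcal P_{\chi}$ and run uniform a priori estimates ($C^0$, concavity-based $C^2$, Evans--Krylov, Schauder), ``feeding the quantitative positivity of \eqref{conj} into the estimates.'' But converting the numerical positivity \eqref{conj} into a uniform $C^0$ bound (equivalently, into a subsolution for the limiting class) \emph{is} the Collins--Jacob--Yau conjecture; on a general non-projective K\"ahler manifold no such implication is known, and the known a priori estimates for the supercritical dHYM equation all take a $\mathcal C$-subsolution as input. The solutions for $[\omega_j]$ do give subsolutions for their own classes, but with no uniform gap as $j\to\infty$, so they do not furnish a subsolution for $[\omega_\infty]$. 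You flag this yourself as ``the main obstacle,'' but the proposal does not resolve it, so as written the closedness claim is not proved. The paper circumvents this entirely: it invokes the existence theorem of Chu--Lee--Takahashi \cite[Theorem 1.3]{CLT}, whose hypothesis is positivity of $\int_V\Vol_{\chi}^p(\alpha_t,\Arg_{\chi}(\alpha))$ along the whole ray $\alpha_t=\alpha+t[\chi]$, $t\in[0,\infty)$ (a strictly stronger condition than $\alpha\in\mathcal P_{\chi}$), and then verifies this hypothesis for $\alpha\in\overline{\mathcal K_{\chi}}\cap\mathcal P_{\chi}$ by combining G.~Chen's monotonicity of $t\mapsto\int_V\Vol_{\chi}^p(\alpha_{i,t},\Arg_{\chi}(\alpha_i))$ for the approximating classes $\alpha_i\in\mathcal K_{\chi}$ (Remark \ref{positve along test family}) with a limiting argument and the strict positivity at $t=0$ coming from $\alpha\in\mathcal P_{\chi}$. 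Some such cohomological input along the test family is exactly what replaces the missing $C^0$ estimate, and your proof needs it.
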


\subsection*{Acknowledgements}  The author expresses his gratitude to his advisor Song Sun for the constant support and invaluable suggestions. He would like to thank Vamsi Pritham Pingali for pointing out the reference \cite{bal} and Yifan Chen, Jianchun Chu and Liding Huang for their interest in the work. The author extends special thanks to Jianchun Chu for his helpful comments and for pointing out references \cite{CY} and \cite{CL}. He is grateful to Mao Sheng for suggesting much easier counterexamples of \eqref{CJY conj}. He thanks the anonymous referee for pointing out an imprecise statement in the previous version of the paper and suggestions improving the presentation.

\section{proof}\label{s2}
\noindent\textit{Proof of Proposition \ref{easy direction}.} Let $\omega$ be a solution of the supercritical dHYM equation \eqref{specified lagrangian}.
    Let $\chi^{-1}\cdot \omega$ denote the endomorphism of $T^{1,0}X$ defined by 
    \begin{equation*}
        \omega(\cdot,\cdot)=\chi(K\cdot,\cdot).
    \end{equation*} The notation $(\left.\chi\right|_{V})^{-1}\cdot(\left.\omega\right|_{V})$ can be understood similarly. Then locally in a neighborhood of any regular point of $V$, we can choose holomorphic coordinates such that $\chi^{-1}\cdot \omega$ is given by a Hermitian matrix $A$ and $(\left.\chi\right|_{V})^{-1}\cdot(\left.\omega\right|_{V})$ is given by a hermitian matrix $B$, which is a principal submatrix of $A$, i.e.
    \begin{equation*}
        A=\left(\begin{array}{cc}
           B  & C \\
           C^*  & D
        \end{array}\right)
    \end{equation*} where $*$ denotes the conjugate transpose. Suppose eigenvalues of $A$ are $\lambda_1\geq \cdots\geq\lambda_k\geq \cdots \geq \lambda_n$ and the eigenvalues of $B$ are $\mu_1\geq \cdots\geq\mu_k\geq \cdots\geq \mu_p$. Then by the min-max theorem in linear algebra, we have
    \begin{equation}\label{eigenvalue comparison}
        \lambda_j \geqslant \mu_j \geqslant \lambda_{j+n-p}, \quad j=1,2, \cdots, p.
    \end{equation}
When restricted to the regular part of $V$, we have
\begin{equation}\label{restricted volume}
\begin{aligned}
\left.\Vol_{\chi}^p(\omega)\right|_V&=\left(\Re\prod_{i=1}^p(\mu_i+\ii)-\cot(\Arg_{\chi}(\omega))\Im\prod_{i=1}^p(\mu_i+\ii)\right) \left.\chi^p\right|_V\\
     &=\left(\cot(\sum_{i=1}^p\arccot(\mu_i))-\cot(\Arg_{\chi}(\omega))\right)\sin(\sum_{i=1}^p\arccot(\mu_i))\left.\chi^p\right|_V.
\end{aligned}
\end{equation}
Since $\omega$ is a solution of $\eqref{specified lagrangian}$, we have $\sum_{i=1}^n\arccot(\lambda_i)=\Arg_{\chi}(\omega)\in (0,\pi)$. Note that the function $\arccot$ is decreasing and $p<n$, then by $\eqref{eigenvalue comparison}$ we know
    \begin{equation}
        0<\sum_{i=1}^p\arccot(\mu_i)\leq \sum_{i=n-p+1}^n\arccot(\lambda_i)<\sum_{i=1}^n\arccot(\lambda_i)< \pi.
    \end{equation}Therefore the right hand side of \eqref{restricted volume} is positive pointwisely on the regular part of $V$ and as a consequence we get the integral of  $\Vol_{\chi}^p(\omega)$ over $V$ is positive.
$\hfill \qed$

\begin{remark}\label{positve along test family}	As observed in \cite{Chen}, one can indeed obtain that if $\eqref{specified lagrangian}$ admits a smooth solution, then for $\omega_t=\omega+t\chi$, $ \int_V\Vol_{\chi}^p\left(\omega_t,\Arg_{\chi}(\omega)\right)$ is non-decreasing with respect to $t$ on $[0,\infty)$ for any $p$-dimensional analytic subvariety $V$, $1\leq p\leq n$
	
\end{remark}

In the following, we also use notations $\int_V\Vol^p_{\chi}(\beta)$ and $\Arg_{\chi}(\beta)$ for $\beta\in H^{1,1}_{\mathbb R}(X)$, where $V$ is a $p$-dimensional analytic subvariety. Their definitions are clear from the discussion in Section \ref{s1}.\\

\noindent\textit{Proof of Theorem \ref{main theorem}.} 
  The first statement essentially follows form the results of \cite{CJY} and \cite{CLT}.  We need to show that $\mathcal{K}_{\chi}$ is both open and closed in $\mathcal P_{\chi}$.
  
  We firstly show that $\mathcal{K}_{\chi}$ is an open subset of $H^{1,1}_{\mathbb{R}}(X)$. This is a standard application of the implicit function theorem. See for example \cite[Lemma 7.2]{CJY}. For readers' convenience, we include a proof here. Let $[\omega_0]\in \mathcal{K}_{\chi}$. We may assume $\omega_0$ is harmonic with respect to $\chi$ and by choosing harmonic representatives, we may identify a neighborhood of $[\omega_0]$ in $H^{1,1}_{\mathbb R}(X)$ with a subset set $U$ of smooth closed real $(1,1)$-forms containing $\omega_0$. Since $[\omega_0]\in \mathcal{K}_{\chi}$, there is a smooth function $\varphi_0$ such that $Q_{\chi}(\omega_0+\ii\partial\pp \varphi_0)=\Arg_{\chi}(\omega_0)\in (0,\pi)$. Then fix $\beta\in (0,1)$, $k\geq 2$ and consider the map
     $ F:(-1,1)\times U\times {C^{k,\beta}(X)}/{\mathbb {R}}\rightarrow C^{k-2,\beta}(X)$
given by 
\begin{equation}
    F(s,\alpha,\varphi)= s+Q_{\chi}(\alpha+\ii \partial\pp \varphi)
\end{equation}
Then the linearization of $F$ along the first and the third components at the point $(0,\omega_0,\varphi_0)$ is given by 
\begin{equation*}
    (s,v)\rightarrow s+\Delta_{\eta}(v),
\end{equation*} which is an isomorphism from $\mathbb R\times C^{k,\beta}(X)/\mathbb R$ to  $C^{k-2,\beta}(X)$. Here the operator $\Delta_{\eta}$ is given by \cite[Section 2]{CJY}
\begin{equation*}
    \Delta_\eta=\eta^{j \bar{k}} \partial_j \partial_{\bar{k}},
\end{equation*} where $\eta_{j\bar k}=\chi_{j \bar{k}}+\omega_{j \bar{l}} \chi^{p \bar{l}} \omega_{p \bar{k}}$ and $\omega=\omega_0+\ii\partial\pp \varphi_0$.  By the implicit function theorem we know that there exists a neighborhood $W\subset U$ of $\omega_0$ such that, for any $\alpha\in W$ we can find a unique pair $(t,\varphi)\in \mathbb R\times C^{k,\beta}(X)/\mathbb R$ in a neighborhood of $(0,\varphi_0)$ satisfying
\begin{equation}\label{equaiton for alpha}
    Q_{\chi}(\alpha+\ii\partial\pp \varphi)=\Arg_{\chi}(\omega_0)-t.
\end{equation} Integrating over $X$, we obtain that the constant on the right hand side of \eqref{equaiton for alpha} has to be $\Arg_{\chi}(\alpha)$. By a standard boot strapping argument, we know that $\varphi$ is in fact a smooth function. Therefore $[\alpha]\in \mathcal{K}_{\chi}$ for any $\alpha\in W$, i.e. a neighborhood of $[\omega_0]$ in $H^{1,1}_{\mathbb R}(X)$ is contained in $\mathcal{K}_{\chi}$.
  

  Secondly we show that $\mathcal{K}_{\chi}$ is closed in $\mathcal{P}_{\chi}$, i.e. that $\overline{\mathcal{K}_{\chi}}\cap \mathcal{P}_{\chi}\subset \mathcal{K}_{\chi}$. Let $\alpha\in \overline{\mathcal{K}_{\chi}}\cap \mathcal{P}_{\chi} $. In particular, we have $\Arg_{\chi}(\alpha)\in (0,\pi)$. In order to show $\alpha\in \mathcal{K}_{\chi}$, by \cite[Theorem 1.3]{CLT} it is sufficient to show that for $\alpha_t=\alpha+t[\chi]$, for all $t \in [0,\infty)$ and 
for any $p$-dimensional analytic subvariety $V$, we have:
\begin{equation}
    \int_V\Vol_{\chi}^p\left(\alpha_t,\Arg_{\chi}(\alpha)\right)\geq 0
\end{equation} and the strict inequality holds if $p < n $ for all $t \in [0,\infty)$. Let us prove these inequalities. Since $\alpha \in \overline{\mathcal{K}_{\chi}}$, there exists a sequence $\alpha_i$ in $\mathcal{K}_{\chi}$ converges to $\alpha$. Let $\alpha_{i,t}$ denote $\alpha_i+t[\chi]$. Then for any fixed $t\in [0,\infty)$ and $p$-dimensional analytic subvariety $V$, we have
\begin{equation}
    \begin{aligned}
\int_V\Vol_{\chi}^p\left(\alpha_t,\Arg_{\chi}(\alpha)\right)&=\lim_{i\rightarrow \infty} \int_V\Vol_{\chi}^p\left(\alpha_{i,t},\Arg_{\chi}(\alpha)\right)=\lim_{i\rightarrow \infty} \int_V\Vol_{\chi}^p\left(\alpha_{i,t},\Arg_{\chi}(\alpha_i)\right)\\
&\geq \lim_{i\rightarrow \infty}   \int_V\Vol_{\chi}^p\left(\alpha_i,\Arg_{\chi}(\alpha_i)\right)= \int_V\Vol_{\chi}^p(\alpha)\geq 0
    \end{aligned}
\end{equation} where the first inequality in the second row follows from Remark \ref{positve along test family} and the last inequality is strict if $p < n $ since $\alpha \in \mathcal{P}_{\chi}$.
\vspace{1cm}

Let us elaborate on why, in the case where a torus admits no proper analytic subvarieties, we have $\mathcal{K}_{\chi}\neq \mathcal{P}_{\chi}$. Consider an $n$-dimensional complex torus $X=\mathbb C^n/\Lambda$, where $(z_1,\cdots,z_n)$ represents the standard coordinate on $\mathbb C^n$. Let  $\chi=\ii \sum_{i=1}^{n}dz_i\wedge d\bar z_i$ and  $\omega=A \chi$, where $A\in \mathbb R$. Apparently the equation
\begin{equation*}
	\mathcal Q_{\chi}(\omega_{\varphi})=n\arccot(A)
\end{equation*}
admits a smooth solution. The maximal principle ensures that if there exists a constant $c$ such that $\mathcal Q_{\chi}(\omega_{\varphi})=c$ has a solution, then this constant $c$ must be $n\arccot(A)$. Therefore we know that 
\begin{equation*}
	A\chi\in \mathcal{K}_{\chi} \text{ if and only if } n\arccot(A)\in (0,\pi).
\end{equation*} On the other hand, given the torus's property of having no positive dimensional proper analytic subvarieties, we know that 
\begin{equation*}
	A\chi\in \mathcal{P}_{\chi} \text{ if and only if } n\arccot(A) \text{ mod } 2\pi \in (0,\pi).
\end{equation*}
Then it follows that if $n\geq 3$, $\mathcal{K}_{\chi}\neq \mathcal{P}_{\chi}$.
$\hfill \qed$

As one can see, the key issue lies in the definition of the supercritical deformed Hermitian-Yang-Mills equations. We require not only $\Arg_{\chi}(\omega)\in (0,\pi)$ but also pointwisly $\mathcal Q_{\chi}(\omega_{\varphi})\in (0,\pi)$. However when $X$ has no positive dimensional proper analytic subvarieties, $\omega\in \mathcal P_{\chi}$ only requires $\Arg_{\chi}(\omega)\in (0,\pi)$.
To address this issue, one might consider refining the definition of the set $\mathcal K_{\chi}$, giving rise to a new set denoted as $\mathcal K^1_{\chi}$. Specifically, we may define $\omega \in \mathcal K^1_{\chi}$ if $\Arg_{\chi}(\omega)\in (0,\pi)$ and the equation
\begin{equation*}
	\mathcal Q_{\chi}(\omega_{\varphi})=\Arg_{\chi}(\omega) \text{ mod } 2\pi
	\end{equation*}admits a solution.
However, as noted by G.Chen \cite[Remark 1.10]{Chen}, $\mathcal K^1_{\chi}$ does not necessarily belong to $\mathcal P_{\chi}$.
Therefore it seems that except for some special cases considered in \cite[Section 8]{CY} and  \cite{CL}, in order to give a numerical criterion for $\omega\in \mathcal K_{\chi}$ on a K\"ahler (non-projective) manifold, the test family introduced by G.Chen \cite[Theorem 1.7]{Chen} is necessary.

\section{Discussion}
As mentioned above, not much is known for numerical conditions on non-supercritical dHYM equations.
Here we present a general numerical obstruction for dHYM equations on 3-dimensional compact K\"ahler manifolds, which has been essentially proven by Collins-Xie-Yau in \cite{CXY}. 
 
The author acknowledges Jianchun Chu for pointing out the reference \cite{CY}, which covers most of the material presented in the initial version of this section. We therefore refer to \cite[Section 8]{CY} for further discussion on dHYM equations on 3-dimensional K\"ahler manifolds.
 As a reminder, $Q_{\chi}(\omega_{\varphi})=\sum_{i=1}^n\arccot(\lambda_i)$, where $\lambda_i$ denotes the eigenvalues of $\omega_{\varphi}$ with respect to $\chi$, and $\arccot(x)=\frac{\pi}{2}-\arctan(x)$ takes values in $(0,\pi)$.
\begin{proposition}\label{prop nonzero}
	On a compact 3-dimensional K\"ahler manifold $(X,\chi)$, suppose
	$Q_{\chi}(\omega_{\varphi})=\theta$ admits a smooth solution for some constant $\theta\in (0,3\pi)$. Then for any $t\in [0,1]$, we have
	\begin{equation}\label{nonzero}
		\gamma([\omega],t):=\int_X(t\omega+\ii \chi)^3\neq 0.
	\end{equation}
\end{proposition}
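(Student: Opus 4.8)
The plan is to use the solution to pin down the pointwise argument of the integrand and then to show that, for each fixed $t$, this argument varies by less than $\pi$ across $X$, which forces the integral to lie in an open half-plane and hence to be nonzero.

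First I would fix the solution $\tilde\omega = \omega + \sqrt{-1}\partial\pp\varphi$ guaranteed by the hypothesis, with eigenvalues $\lambda_1(x),\lambda_2(x),\lambda_3(x)$ relative to $\chi$, so that $\sum_{i=1}^3\arccot(\lambda_i(x)) = \theta$ for every $x\in X$. Since $\gamma([\omega],t)$ depends only on cohomology I may replace $\omega$ by $\tilde\omega$, and simultaneous diagonalization gives the pointwise identity $(t\tilde\omega+\sqrt{-1}\chi)^3 = \prod_{i=1}^3(t\lambda_i + \sqrt{-1})\,\chi^3$. Writing $\Theta_t(x) := \sum_{i=1}^3\arccot(t\lambda_i(x))\in(0,3\pi)$ for the argument of $\prod_i(t\lambda_i+\sqrt{-1})$ and $\rho_t(x):=\prod_i\sqrt{1+t^2\lambda_i^2}>0$ for its modulus, one gets $\gamma([\omega],t) = \int_X \rho_t\, e^{\sqrt{-1}\,\Theta_t}\,\chi^3$, integrated against the positive volume form $\chi^3$. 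Note $\Theta_1\equiv\theta$ and $\Theta_0\equiv 3\pi/2$. The key reduction is: if for a fixed $t$ the oscillation $\max_X\Theta_t - \min_X\Theta_t$ is strictly less than $\pi$, then all the pointwise values $\rho_t e^{\sqrt{-1}\Theta_t}$ lie in one open half-plane through the origin, so $\operatorname{Re}\big(e^{-\sqrt{-1}\,\psi}\gamma([\omega],t)\big) > 0$ for a suitable $\psi$, and in particular $\gamma([\omega],t)\neq 0$.

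Second I would prove this oscillation bound. Since $\Theta_t(x)$ is a fixed function of the eigenvalue triple $(\lambda_1,\lambda_2,\lambda_3)(x)$, it suffices to bound the oscillation of $(\lambda_1,\lambda_2,\lambda_3)\mapsto\sum_i\arccot(t\lambda_i)$ over all triples satisfying the solvability constraint $\sum_i\arccot(\lambda_i)=\theta$. Setting $u_i=\arctan(\lambda_i)\in(-\pi/2,\pi/2)$, the constraint becomes $\sum_i u_i = 3\pi/2-\theta$ and $\Theta_t = 3\pi/2 - \sum_i g(u_i,t)$ with $g(u,t):=\arctan(t\tan u)$. A direct computation gives $\partial_u^2 g = (\text{positive factor})\cdot\sin(2u)$, so $g(\cdot,t)$ is convex on $(0,\pi/2)$ and concave on $(-\pi/2,0)$; consequently the extrema of $\sum_i g(u_i,t)$ under the linear constraint are approached as one or more of the $u_i$ tend to $\pm\pi/2$, i.e. as eigenvalues tend to $\pm\infty$. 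Evaluating these boundary configurations shows the oscillation is at most $\pi$, with equality attained only in the limit of unbounded eigenvalues.

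Third I would upgrade this to a strict, uniform bound and conclude. For the fixed smooth form $\tilde\omega$ on the compact manifold $X$ the eigenvalues $\lambda_i(x)$ are uniformly bounded, so the realized triples lie in a compact subset of the interior $(-\pi/2,\pi/2)^3$; hence for each $t$ the oscillation of $\Theta_t$ is strictly below the boundary value $\pi$. As the oscillation depends continuously on $t$ over the compact interval $[0,1]$ and vanishes at $t=0,1$, it stays uniformly below $\pi$, and the half-plane argument of the first step then gives $\gamma([\omega],t)\neq 0$ for every $t\in[0,1]$. The main obstacle is precisely the finite-dimensional oscillation inequality of the second step: showing, uniformly in $t\in[0,1]$ and for every admissible $\theta\in(0,3\pi)$, that $\sum_i\arccot(t\lambda_i)$ cannot spread by as much as $\pi$ once the constraint $\sum_i\arccot(\lambda_i)=\theta$ is imposed. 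This is exactly where the existence of a solution, which makes $\Theta_1$ constant, enters, and carrying out the constrained optimization carefully—rather than the subsequent compactness and half-plane arguments—is the heart of the matter; it is essentially the content supplied by \cite{CXY}.
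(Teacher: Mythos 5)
Your strategy is genuinely different from the paper's. The paper first uses the symmetry $\gamma([\omega],t)=0\Leftrightarrow\gamma(-[\omega],t)=0$ to reduce to $\theta\in(0,\tfrac{3\pi}{2}]$, and then splits into two cases: for $\theta\in(0,\pi)$ it simply invokes the Chern number inequality \eqref{chern number ineq} of Collins--Xie--Yau, while for $\theta\in[\pi,\tfrac{3\pi}{2}]$ it observes that $\int_X\Im(t\omega+\ii\chi)^3=3t^2\int_X\omega^2\wedge\chi-\int_X\chi^3$ is monotone in $t$, negative at $t=0$ and non-positive at $t=1$, hence negative (in particular nonzero) for $t\in[0,1)$. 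Your pointwise ``half-plane'' argument would, if completed, be a unified and in some respects stronger statement, but it is not the paper's route.

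The genuine gap is that the finite-dimensional oscillation inequality at the heart of your second step is neither proved nor correctly sourced. The sign computation for $\partial_u^2 g$ (convex on $(0,\pi/2)$, concave on $(-\pi/2,0)$) does not imply that the constrained extrema of $\sum_i g(u_i,t)$ are approached only as some $u_i\to\pm\pi/2$: precisely because $g''$ changes sign, the restriction of $\sum_i g(u_i,t)$ to the hyperplane $\sum_i u_i=c$ has interior critical points (those with all $|u_i|$ equal, e.g.\ $u_1=u_2=u_3=c/3$) which are genuine local extrema, so the global maximum and minimum must be compared among several interior critical values and a tree of boundary strata. That case analysis is not carried out, and it is exactly where the claimed bound ``oscillation $\le\pi$'' could fail; nothing in the proposal certifies it for all $c=\tfrac{3\pi}{2}-\theta\in(-\tfrac{3\pi}{2},\tfrac{3\pi}{2})$ and all $t\in(0,1)$. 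The attribution to \cite{CXY} does not close this gap: what Collins--Xie--Yau prove is the inequality \eqref{chern number ineq} (hence non-vanishing) for the supercritical range $\theta\in(0,\pi)$ only, by an averaged argument, not a pointwise oscillation bound valid for every $\theta\in(0,3\pi)$. Two smaller points: the inference ``the oscillation is continuous in $t$ and vanishes at $t=0,1$, hence stays uniformly below $\pi$'' is a non sequitur (a continuous function vanishing at the endpoints can be large in between), so everything rests on the unproved pointwise-in-$t$ bound; and you never use the cheap reduction $\omega\mapsto-\omega$, which halves the range of $\theta$ to be treated and is what makes the paper's elementary second case possible.
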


\begin{proof}
	Note that $\gamma([\omega],t)=0$ if and only if $\gamma(-[\omega],t)=0$. Therefore, we can assume without loss of generality that $Q_{\chi}(\omega_{\varphi}) =\theta\in (0,\frac{3\pi}{2}]$, by replacing $[\omega]$ with $-[\omega]$ if necessary.
	\begin{itemize}
		\item Case 1, $\theta\in (0,\pi)$. This is due to Collins-Xie-Yau \cite[Proposition 3.3]{CXY} and indeed they proved the following Chern number inequaly which implies \eqref{nonzero}.
		\begin{equation}\label{chern number ineq}
\int_X\omega^3\cdot\int_X\chi^3
<9\int_X\omega^2\wedge\chi\cdot\int_X\omega\wedge\chi^2.	\end{equation}

		\item Case 2, $\theta\in [\pi,\frac{3\pi}{2})$. Then the problem is much easier. Observe that
			$$\int_X\Im(t\omega+\ii\chi)^3=3t^2\int_X\omega^2\wedge\chi-\int_X\chi^3$$
  is a monotonic function of $t$ and is negative at $t=0$. As the condition $\theta\in [\pi,\frac{3\pi}{2})$ implies that the function is non-positive at $t=1$, it follows that it must be negative, and in particular nonzero, if $t\in [0,1)$.
	\end{itemize}
\end{proof}

Note that the numerical obstruction \eqref{nonzero} holds not only for supercritical dHYM equations, that is there is no restriction  on the angle. In particular, if there is a class $\alpha$ with $\int_X(\alpha+\ii \chi)^3=0$, then for any $t>1$, there is no smooth function $\varphi$ such that $Q_{\chi}(t\alpha_{\varphi})$ is a constant, regardless of the chosen constant.

As discussed in \cite{CXY,CY}, the crucial point is that if \eqref{nonzero} holds, a unique smooth function $\Theta([\omega], \cdot):[0,1]\rightarrow \mathbb R$ exists, with $\Theta([\omega],0)=\frac{3}{2}\pi$ and $\Im\left(e^{-\ii \Theta([\omega],t)}\gamma([\omega],t)\right)=0$ for all $t\in [0,1]$. It is easy to show that if $Q_{\chi}(\omega_{\varphi})=\theta$ admits a smooth solution and $\Theta([\omega],t)$ is the aforementioned function, then $\Theta([\omega],1)=\theta$. Therefore if \eqref{nonzero} holds, it enables a priori determination of the constant in the dHYM equations and by requiring $\Theta([\omega],1)\in (0,\pi)$, we can effectively address the challenge posed by the modulo $2\pi$ issue, which plays an essential role in the counterexamples we discussed. 

To obtain a numerical characterization of $\mathcal K_{\chi}$ (without using the test family introduced by G. Chen), further conditions are necessary, as discussed in \cite[Section 8]{CY}. It is worth noting that one of the conjectures proposed in that section has been proven in \cite{CL}, which builds on the work in \cite{CLT}.

\bibliographystyle{plain}
\bibliography{ref.bib}

\end{document}